\theoremstyle{definition}
\newtheorem{lemma}{Lemma}[section]
\newtheorem{theorem}[lemma]{Theorem}
\newtheorem{corollary}[lemma]{Corollary}
\newtheorem{remark}{Remark}
\numberwithin{equation}{section}
\DeclareFixedFont{\Acknowledgment}{OT1}{cmr}{bx}{n}{14pt}
\begin{document}

\title{\bf Positive Solutions of $p$-th Yamabe Type Equations on Graphs}
\author{Xiaoxiao Zhang, Aijin Lin}
\date{}
\maketitle

\begin{abstract}
Let $G=(V,E)$ be a finite connected weighted graph, and assume $1\leq\alpha\leq p\leq q$. In this paper, we consider the following $p$-th Yamabe type equation
$$-\Delta_pu+hu^{q-1}=\lambda fu^{\alpha-1}.$$
on $G$, where $\Delta_p$ is the $p$-th discrete graph Laplacian, $h\leq0$ and $f>0$ are real functions defined on all vertices of $G$. Instead of the approach in \cite{Ge3}, we adopt a new approach, and prove that the above equation always has a positive solution $u>0$ for some constant $\lambda\in\mathds{R}$. In particular, when $q=p$ our result generalizes the main theorem in \cite{Ge3} from the case of $\alpha\geq p>1$ to the case of $1\leq\alpha\leq p$. It's interesting that our new approach can also work in the case of $\alpha\geq p>1$.
\end{abstract}
\vspace{12pt}
\let\thefootnote\relax \footnotetext {The first author
is supported by the National Natural Science Foundation of China
(Grant No. 11431003). The second author is supported by the National Natural Science Foundation of China
(Grant No. 11401578).}


\section{Introduction}

As is known that, let $(M^m,g)$ be a closed Riemannian manifold of dimension $m(\geq3)$, the Yamabe problem consists of finding metrics of constant scalar curvature in $[g]=\{fg|f:M\rightarrow \mathbb{R}_{>0}\}$, the conformal class of $g$. A metric $\bar{g}=f^{p_m-2}g$ conformal to $g$ has constant scalar curvature $s\in \mathbb{R}$ if and only if the positive function $f$ satisfies the \emph{Yamabe\ equation} corresponding to $g$:
$$-a_m\Delta_gf+s_gf=sf^{p_m-2}, $$
where $\Delta_g$ denotes the Laplace-Beltrami operator corresponding to $g$, and $s_g$ denotes the scalar curvature of $g$, $a_m=\frac{4(m-1)}{m-2}$, $p_m=\frac{2m}{m-2}$ is the Sobolev critical exponent.

In the case of closed manifolds it was proved that at least one solution exists following a program introduced by H. Yamabe in \cite{Yamabe}. Solutions of the Yamabe equations are critical points of the Hilbert-Einstein function $S$ on the space of Riemannian metrics on $M$ restricted to $[g]$, the conformal class of $g$,
$$S(g)=\frac{\int_Ms_gd\mu_g}{Vol(M,g)^{2/p_m}}.$$
This problem was also studied by Trudinger \cite{Tru}, Aubin \cite{Aubin}, and completely solved by Schoen \cite{Schoen}.

Recently, there are tremendous work concerning the discrete weighted Laplacians and various equations on graphs, among those we refer readers to \cite{BHJ,CL2,CLC,FHY,GLY,GLY2,GLY3,Ge1,Ge2,Ge3,Ge4,HKL,H,LW,WZ}. Particularly, there have been some works on dealing with Yamabe type equations on graphs, on which please refer to \cite{GLY2,GLY3,Ge3,Ge4}. Grigor'yan, Lin and Yang \cite{GLY3} first studied a Yamabe type equation on a finite graph as follows
 \begin{equation}\label{YE1}
 -\Delta u+hu=|u|^{\alpha-2}u,\quad \alpha>2,
 \end{equation}
  where $\Delta$ is a usual discrete graph Laplacian, and $h$ is a positive function defined on the vertices. They show that the equation (\ref{YE1}) always has a positive solution. Inspired by their work, Ge \cite{Ge3} studied a Yamabe type equation on a finite graph, that is
  \begin{equation}\label{G}
    -\Delta_pu+hu^{p-1}=\lambda fu^{\alpha-1}.
  \end{equation}
We state the main result in \cite{Ge3} as follows
\begin{theorem}\label{G1}
Let $G=(V,E)$ be a finite connected graph. Given $h, f\in C(V)$ with $h\leq0, f>0$. Assume $\alpha\geq p>1$. Then the following $p$-th Yamabe equation
\begin{equation*}
-\Delta_pu+hu^{p-1}=\lambda fu^{\alpha-1}
\end{equation*}\label{def-Yamabe-equ-p}
on $G$ always has a positive solution $u$ for some constant $\lambda\in\mathds{R}$.
\end{theorem}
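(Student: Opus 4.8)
The plan is to realize the equation as the Euler--Lagrange equation of a constrained minimization problem and to solve it by the direct method, which on a finite graph is especially clean because the ambient function space $C(V)\cong\mathbb{R}^{|V|}$ is finite-dimensional. Concretely, I would introduce the energy functional
\[
J(u)=\int_V|\nabla u|^p\,d\mu+\int_V h|u|^p\,d\mu,
\]
to be minimized over the constraint set
\[
\mathcal{B}=\Big\{u\in C(V):\ \int_V f|u|^\alpha\,d\mu=1\Big\}.
\]
Computing the first variation (after the graph integration-by-parts formula), a critical point $u$ of $J|_{\mathcal{B}}$ satisfies $-\Delta_p u+hu^{p-1}=\lambda f u^{\alpha-1}$ with $\lambda$ exactly the Lagrange multiplier; testing this identity against $u$ and using $\int_V(-\Delta_p u)u\,d\mu=\int_V|\nabla u|^p\,d\mu$ together with the normalization then identifies $\lambda=J(u)$, the minimal energy.

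First I would establish existence of a minimizer. The set $\mathcal{B}$ is nonempty, since rescaling any nonnegative $u_0\not\equiv0$ by a positive constant puts it in $\mathcal{B}$ because $f>0$. Since $f>0$ and $\mu$ is positive on the finite vertex set, the constraint $\int_V f|u|^\alpha\,d\mu=1$ (a finite sum over vertices) forces $\max_x|u(x)|$ to be bounded, so $\mathcal{B}$ is a closed and bounded, hence compact, subset of $\mathbb{R}^{|V|}$. As $J$ is continuous, it attains its infimum on $\mathcal{B}$ at some $u$; and $p>1$ makes $|\nabla u|^p$ and $|u|^p$ of class $C^1$, so the Euler--Lagrange step is justified without any regularity obstruction. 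I would emphasize that this compactness is automatic in the discrete setting and replaces the delicate concentration-compactness analysis required in the continuous Yamabe problem.

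Next I would arrange that the minimizer is nonnegative and then strictly positive. Replacing $u$ by $|u|$ leaves the constraint unchanged and does not increase the energy, since the reverse triangle inequality $\big||u(y)|-|u(x)|\big|\le|u(y)-u(x)|$ gives $|\nabla|u||\le|\nabla u|$ pointwise while $\int_V h\,||u||^p\,d\mu=\int_V h|u|^p\,d\mu$; hence I may assume $u\ge0$. The genuinely delicate step, and the main obstacle, is upgrading $u\ge0$ to $u>0$ via a discrete strong maximum principle. Suppose $u(x_0)=0$ at some vertex $x_0$. Evaluating the equation at $x_0$ and using $\alpha>1$ and $p>1$ kills both the right-hand side and the $h$-term, forcing $\Delta_p u(x_0)=0$. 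On the other hand, $u\ge0$ and $u(x_0)=0$ make every summand of $\Delta_p u(x_0)$ a nonnegative, nondecreasing function of the neighboring values $u(y)$ that is strictly positive as soon as some $u(y)>0$; thus $\Delta_p u(x_0)=0$ forces $u(y)=0$ for all $y\sim x_0$. By connectedness of $G$, $u$ would then vanish identically, which is impossible on $\mathcal{B}$. Hence $u>0$ everywhere, and $(u,\lambda)$ solves the equation.

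Finally, I expect the remaining points to be bookkeeping: verifying the precise first-variation formula for the chosen discrete definition of $\Delta_p$ (so the constant absorbed into $\lambda$ is correct), and checking differentiability of the constraint at the positive minimizer, whose gradient is $\alpha\int_V f u^{\alpha-1}(\cdot)\,d\mu$ and is nonzero because testing against $u$ yields $\alpha\neq0$, so the Lagrange multiplier rule applies. Notably, the hypotheses $\alpha\ge p$ and $h\le0$ enter only through $\alpha>1$ and through harmless sign considerations, with finite-dimensional compactness doing the heavy lifting; this is consistent with the paper's claim that essentially the same variational mechanism remains available beyond the range $\alpha\ge p>1$.
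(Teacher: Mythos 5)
Your argument is correct, and it is essentially the proof of \cite{Ge3} that this paper imports for Theorem \ref{G1} rather than reproving: minimizing $J(u)=\int_V|\nabla u|^p d\mu+\int_V h|u|^p d\mu$ on $\{\int_V f|u|^\alpha d\mu=1\}$ is the same as minimizing the degree-zero homogeneous quotient $I(u)$ recalled in Section 5.2 (note your sign $+\int_V h|u|^p d\mu$ is the one that actually produces $-\Delta_pu+hu^{p-1}=\lambda fu^{\alpha-1}$; the displayed $I(u)$ in Section 5.2 has $-\int_V hu^p d\mu$, which would flip the sign of the $h$-term in the Euler--Lagrange equation and appears to be a typo). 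All your steps check out on a finite graph: compactness of the constraint set, the truncation $u\mapsto|u|$, nondegeneracy of the constraint gradient, and the connectedness argument at a zero vertex, which is the same strong-maximum-principle step as the paper's Step 4 (there phrased as $\partial\Phi/\partial u_i<0$ contradicting minimality). Where you diverge from this paper's own contribution is instructive: the authors deliberately abandon the normalization $\int_V fu^\alpha d\mu=1$ because it is tied to $\alpha\ge p$, and instead constrain $\frac1q\int_V hu^q d\mu=-1$, minimize $E(u)=\frac1p\int_V|\nabla u|^pd\mu-\frac\lambda\alpha\int_V fu^\alpha d\mu$ to solve the two-parameter equation $-\Delta_pu+\mu hu^{q-1}=\lambda fu^{\alpha-1}$, and then rescale $u\mapsto\mu^{1/(q-p)}u$ to absorb the extra multiplier. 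Your route is shorter and self-contained in the regime $\alpha\ge p>1$ of the stated theorem; theirs is built to survive $1\le\alpha\le p$, at the cost of the auxiliary parameter and the scaling step (which, as written, needs $q>p$). Your closing remark that $h\le0$ and $\alpha\ge p$ enter only weakly is accurate for the finite-graph setting.
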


From this result, one naturally wants to know:\\

\textbf{Problem I}: Can one solve the $p$-th Yamabe equation (\ref{G}) for $1\leq\alpha\leq p$ ?\\

To answer this question is our main purpose of this paper.

\begin{remark}
Ge and Jiang \cite{Ge4} also studied a following Yamabe type equation
  \begin{equation*}\label{alpha>p}
    -\Delta_pu+hu^{p-1}=gu^{\alpha-1},\ u>0,
  \end{equation*}
  on an infinite graph.
The main result in \cite{Ge4} is as follows
 \begin{theorem}
  Consider the $p$-th Yamabe equation (\ref{G}) on a connected, infinite and locally finite graph $G$ with $\alpha>p\geq2$. Assume $g\geq0$ and $g$ is bounded from above, $h$ satisfies $\inf_{x\in V}h(x)>0$ and $\inf_{x\in V}h(x)\mu(x)>0$. Further assume $h^{-1}\in L^{\delta}(V)$ for some $\delta>0$ (or $h(x)\rightarrow\infty$ when $x\rightarrow\infty$), then (\ref{G}) has a positive solution.
  \end{theorem}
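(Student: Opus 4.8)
The plan is to solve the equation by the direct method in the calculus of variations, realizing the solution as a minimizer of a Dirichlet-type energy on a constraint manifold. Since the graph is infinite, the whole difficulty is compactness, and the hypotheses on $h$ are exactly what restores it. First I would fix the weighted Sobolev space
$$W = \Big\{u:V\to\mathds{R} \ \Big|\ \|u\|^p := \int_V\big(|\nabla u|^p + h|u|^p\big)\,d\mu < \infty\Big\},$$
which is a reflexive Banach space (it is uniformly convex for $p\geq2$ and $\inf_x h(x)>0$). The assumption $\inf_x h(x)\mu(x)>0$ yields the continuous embedding $W\hookrightarrow\ell^p(V)$, hence $W\hookrightarrow\ell^\alpha(V)$ for $\alpha>p$; since $g$ is bounded, the functional $u\mapsto\int_V g|u|^\alpha\,d\mu$ is well defined and continuous on $W$. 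I would then study the constrained problem
$$\Lambda = \inf\Big\{\tfrac1p\|u\|^p \ :\ u\in W,\ \int_V g|u|^\alpha\,d\mu = 1\Big\}.$$
Any minimizer $u$ solves $-\Delta_p u + hu^{p-1} = \lambda g u^{\alpha-1}$ with Lagrange multiplier $\lambda$; testing against $u$ gives $\lambda = \|u\|^p = p\Lambda > 0$, and because $\alpha\neq p$ the rescaling $u\mapsto tu$ with $t=\lambda^{-1/(\alpha-p)}$ converts the equation into the stated one with coefficient $g$. So it suffices to produce a nonnegative minimizer.

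The heart of the argument is to show that a minimizing sequence $\{u_n\}$ converges. Such a sequence is bounded in $W$, so after passing to a subsequence $u_n\rightharpoonup u$ weakly in $W$ and pointwise on $V$ (point evaluations are continuous on $W$). To upgrade this to $\int_V g|u_n|^\alpha\,d\mu\to\int_V g|u|^\alpha\,d\mu$ I would prove tightness: given $\varepsilon>0$ there is a finite set $K\subset V$ with $\sup_n\sum_{x\notin K} g(x)|u_n(x)|^\alpha\mu(x)<\varepsilon$. This is where the hypotheses on $h$ enter. Using $\|g\|_\infty<\infty$ and the uniform bound $|u_n(x)|\leq\|u_n\|_{\ell^p}\leq C$ coming from $W\hookrightarrow\ell^p\hookrightarrow\ell^\infty$, I would estimate
$$\sum_{x\notin K} g|u_n|^\alpha\mu \;\leq\; \|g\|_\infty C^{\alpha-p}\sum_{x\notin K}|u_n|^p\mu \;\leq\; \frac{\|g\|_\infty\,C^{\alpha-p}}{\inf_{x\notin K}h(x)}\int_V h|u_n|^p\,d\mu,$$
and the condition $h(x)\to\infty$ (which is implied by $h^{-1}\in L^{\delta}(V)$) lets me choose $K$ so that $\inf_{x\notin K}h$ is as large as I wish, making the tail small uniformly in $n$. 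Tightness together with pointwise convergence yields strong convergence in $L^\alpha(V,g\mu)$, so $u$ satisfies the constraint, and by weak lower semicontinuity of $u\mapsto\|u\|^p$ it attains the infimum $\Lambda$.

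Finally I would verify positivity. Replacing $u$ by $|u|$ does not increase the energy, since the discrete gradient satisfies $\big||u(y)|-|u(x)|\big|\leq|u(y)-u(x)|$, and it preserves the constraint; so the minimizer may be taken with $u\geq 0$ and $u\not\equiv 0$. If $u(x_0)=0$ at some vertex, the equation forces $-\Delta_p u(x_0)=0$; but with $u\geq 0$ and $u(x_0)=0$ one computes $-\Delta_p u(x_0) = -\frac{1}{\mu(x_0)}\sum_{y\sim x_0}w_{x_0y}\,u(y)^{p-1}\leq 0$, with equality only if $u(y)=0$ for every neighbour $y$ of $x_0$. Since $G$ is connected this would propagate to $u\equiv 0$, a contradiction, so $u>0$ everywhere. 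I expect the tightness estimate to be the main obstacle: it is the only place where the infiniteness of $G$ could cause a loss of mass, and it requires combining the embedding into $\ell^p$, the discrete interpolation bound $|u_n|^{\alpha-p}\leq C^{\alpha-p}$, and the decay of $h^{-1}$ so that the tail is controlled \emph{uniformly} along the minimizing sequence.
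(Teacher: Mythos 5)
This theorem is not proved in the paper at all: it is quoted in Remark 1 from the private communication \cite{Ge4}, so there is no in-paper argument to compare your proposal against. Judged on its own terms, your constrained-minimization scheme is the natural one for this result and I see no essential gap: the embedding $W\hookrightarrow \ell^p(V)\hookrightarrow\ell^\infty(V)$ from $\inf_x h(x)\mu(x)>0$, the tightness estimate using the blow-up of $h$ at infinity, weak lower semicontinuity of the norm, the Lagrange multiplier with $\lambda=\|u\|^p>0$, the homogeneity rescaling (possible precisely because $\alpha\neq p$), and the connectedness argument for strict positivity are exactly the ingredients such a proof needs, and each step you describe goes through on a locally finite graph. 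Three small points deserve attention. First, the constraint set $\{\int_V g|u|^\alpha d\mu=1\}$ is nonempty only if $g\not\equiv 0$; this must be assumed (if $g\equiv 0$ the equation has no positive solution, since testing against $u$ gives $\int_V|\nabla u|^p d\mu+\int_V hu^p d\mu=0$ with $h>0$). Second, your parenthetical claim that $h^{-1}\in L^\delta(V)$ implies $h(x)\to\infty$ is true but only because of the standing hypothesis $\inf_x h(x)\mu(x)>0$: that hypothesis gives $\mu\geq c/h$, hence $\sum_x h(x)^{-\delta-1}<\infty$ over the counting measure, so $\{h\leq M\}$ is finite for every $M$; without the lower bound on $h\mu$ the implication fails, so this one-line derivation should be included. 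Third, the rescaling exponent has a sign slip: from $-\Delta_p(tu)+h(tu)^{p-1}=t^{p-1}\lambda gu^{\alpha-1}=t^{p-\alpha}\lambda g(tu)^{\alpha-1}$ one needs $t^{\alpha-p}=\lambda$, i.e.\ $t=\lambda^{1/(\alpha-p)}$, not $\lambda^{-1/(\alpha-p)}$; since $\lambda>0$ this is harmless. With these repairs the argument is complete in outline and consistent with the method of \cite{GLY3} on which such infinite-graph results are modeled.
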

 From this result one still needs to know:
Can one solve this $p$-th Yamabe equation (\ref{G}) under the assumption $2<\alpha\leq p$ ?\par
We will solve this problem in our another paper \cite{ZL}.
\end{remark}

Next note that Grigor'yan, Lin and Yang's pioneer paper \cite{GLY3} also studied a similar Yamabe type equation as follows
\begin{equation}\label{L}
-\Delta_pu+h|u|^{p-2}u=f(x,u), \;\;p>1,
\end{equation}
on a finite graph under the assumption $h>0$.
They show that the equation (\ref{L}) always has a positive solution under certain assumptions about $f(x,u)$. It is remarkable that their $\Delta_p$ considered in the equation (\ref{L}) is different from ours when $p\neq2$.\par
If we replace the $p$-th Laplacian $\Delta_p$ in equation (\ref{G}) with $\Delta_p$ in equation (\ref{L}), then we can ask\\

\textbf{Problem II}: Do positive solutions of the $p$-th Yamabe equation (\ref{G}) still exist ?\\

In this paper we attempt to solve both problems above on a finite graph. The answers turn out be interesting.
 In fact, we can prove that the equation (\ref{G}) actually has a positive solution $u>0$ in the case of $1\leq\alpha\leq p$. Note that Ge's approach in \cite{Ge3} can't succeed in solving Problem I any more, so we have to adopt a new approach. Let's outline our approach to solution of Problem I. \par
 First, we study the following Yamabe type equation
  \begin{equation*}
    -\Delta_pu+\mu hu^{q-1}=\lambda fu^{\alpha-1},
  \end{equation*}
 in the case of $1\leq\alpha\leq p\leq q$.\par
  Note that in this equation we add a new constant $\mu$ and extend exponents of the second term from $p$ to any integer $q$ satisfying $q\geq p$.
 In order to study the equation above, we have to make a transformation as
  \begin{equation*}
    -\Delta_pu-\lambda fu^{\alpha-1}=-\mu hu^{q-1}.
  \end{equation*}
 Otherwise our derivation can't go on.
 Then by using the constrained minimization technique, we can prove that, for any constant $\lambda\in \mathbb{R}$, the equation above
 actually has a positive solution $\hat{u}>0$ at least for some constant $\mu=\mu(\lambda)\in\mathbb{R}$.\par
 Then, set $\tilde{\lambda}<0$, we can construct a positive function $u=u(\hat{u},\tilde{\lambda})$ depending on $\hat{u},\tilde{\lambda}$ by using the scaling technique. In fact, we can prove that this function $u=u(\hat{u},\tilde{\lambda})$ is exactly a positive solution of the Yamabe type equation
   \begin{equation*}
    -\Delta_pu+ hu^{q-1}=\lambda fu^{\alpha-1}.
  \end{equation*}
 for some constant $\lambda=\lambda(\hat{u},\tilde{\lambda})$.\par
Note that here the constant $\mu$ added in previous equation disappear at all, which highly depends on the condition $1\leq\alpha\leq p\leq q$.\par
Finally take $q=p$, we completely solve Problem I.\par
The key point of our approach is the assumption: $1\leq\alpha\leq p\leq q$. In fact, our approach is still successful in the case of $1\leq q\leq p\leq \alpha$, where taking $q=p$ particularly, we can get the main theorem in \cite{Ge3}. In this respect, we find that our approach is more generic.\par

   As for Problem II, we observe that
the difference between the two $p$-th Laplacian $\Delta_p$ in equation (\ref{G}) and in equation (\ref{L}) is not essential, therefore we can similarly prove that equation (\ref{G}) actually has a positive solution. \par
We organize this paper as follows: In section 2, we introduce some notions on graphs and state our main results. In section 3, we give Sobolev imbedding. Section 4 is devoted to prove an important theorem which is key to solve both problems above. In section 5, we completely solve Problem I and Problem II.\\

\section{Settings and main results}
Let $G=(V,E)$ be a finite graph, where $V$ denotes the vertex set and $E$ denotes the edge set. Fix a vertex measure $\mu:V\rightarrow(0,+\infty)$ and an edge measure $\omega:E\rightarrow(0,+\infty)$ on $G$. The edge measure $\omega$ is assumed to be symmetric, that is, $\omega_{ij}=\omega_{ji}$ for each edge $i\thicksim j$.

Denote $C(V)$ as the set of all real functions defined on $V$, then $C(V)$ is a finite dimensional linear space with the usual function additions and scalar multiplications. For any $p>1$, the $p$-th discrete graph Laplacian $\Delta_p:C(V)\rightarrow C(V)$ is
\begin{equation*}
\Delta_pf_i=\frac{1}{\mu_i}\sum\limits_{j\thicksim i}\omega_{ij}|f_j-f_i|^{p-2}(f_j-f_i)
\end{equation*}
for any $f\in C(V)$ and $i\in V$. $\Delta_p$ is a nonlinear operator when $p\neq2$ (see \cite{Ge3} for more properties about $\Delta_p$).
Now we can state an important theorem as follows
\begin{theorem}\label{th1}
Let $G=(V,E)$ be a connected finite graph. Given $h,f\in C(V)$ with $h\leq0$, $f>0.$ Assume $1\leq\alpha\leq p\leq q$. Then for any constant $\lambda\in \mathbb{R}$, the following $p$-th Yamabe equation
\begin{equation}\label{L1}
  -\Delta_pu+\mu hu^{q-1}=\lambda fu^{\alpha-1}
\end{equation}
on $G$ always has a positive solution $u>0$ for some constant $\mu=\mu(\lambda)\in \mathbb{R}$.
\end{theorem}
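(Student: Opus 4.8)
The plan is to realize the transformed equation $-\Delta_p u-\lambda f u^{\alpha-1}=-\mu h u^{q-1}$ as the Euler--Lagrange equation of a constrained minimization problem in which $\lambda$ is a fixed parameter and $\mu$ appears as a Lagrange multiplier. Fix $\lambda\in\mathbb{R}$ and work on the finite-dimensional space $C(V)$. Normalizing the gradient energy as $\|\nabla u\|_p^p=\frac12\sum_{i\in V}\sum_{j\sim i}\omega_{ij}|u_j-u_i|^p$, so that $\frac1p\|\nabla u\|_p^p$ has first variation $-\Delta_p u$, I would take the objective functional
\[
F(u)=\frac1p\|\nabla u\|_p^p-\frac{\lambda}{\alpha}\sum_{i\in V}\mu_i f_i u_i^{\alpha},
\]
and minimize it over the constraint set $\mathcal{B}=\{u\in C(V):u\ge 0,\ \sum_{i\in V}\mu_i(-h_i)u_i^{q}=1\}$, which is nonempty whenever $h\not\equiv0$. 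Writing $G(u)=\frac1q\sum_{i\in V}\mu_i(-h_i)u_i^q$ for the constraint functional, the Lagrange multiplier rule applied at a minimizer should reproduce \eqref{L1}, with the constant $\mu$ equal to the multiplier.

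For the attainment I would combine the Sobolev imbedding of Section 3 with the finite dimensionality of $C(V)$. The constraint controls $u$ from above at any vertex where $-h_i>0$ and hence bounds $\min_V u$, while connectedness yields an oscillation estimate $\max_V u-\min_V u\le C\|\nabla u\|_p$; together these give $\|u\|_\infty\le C(1+\|\nabla u\|_p)$ on $\mathcal B$. Consequently $\sum_i\mu_i f_i u_i^{\alpha}\le C(1+\|\nabla u\|_p^{\alpha})$, and since $1\le\alpha\le p$ the gradient term $\frac1p\|\nabla u\|_p^p$ dominates the subtracted term (strictly so when $\alpha<p$), so that $F$ is bounded below and coercive in $\|\nabla u\|_p$ on $\mathcal B$. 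A minimizing sequence is then bounded in $C(V)$, and a convergent subsequence produces a minimizer $u^\ast\in\mathcal B$ by continuity of $F$ and closedness of $\mathcal B$.

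Next I would extract the equation and the positivity. Since $F$ and $G$ are $C^1$ (using $p>1$ and $\alpha\ge1$) and $G'(u^\ast)=(-h)(u^\ast)^{q-1}\not\equiv0$ because $\sum_i\mu_i(-h_i)(u^\ast_i)^q=1$, the Lagrange multiplier theorem gives some $\beta\in\mathbb{R}$ with $F'(u^\ast)=\beta\,G'(u^\ast)$, that is $-\Delta_p u^\ast-\lambda f(u^\ast)^{\alpha-1}=\beta(-h)(u^\ast)^{q-1}$; setting $\mu:=\beta$ recovers \eqref{L1}. To upgrade $u^\ast\ge0$, $u^\ast\not\equiv0$ to $u^\ast>0$ everywhere, I would use a propagation argument: at a vertex where $u^\ast$ vanishes one has $\Delta_p u^\ast\ge0$, and for $\alpha>1$ the equation then forces $\sum_{j\sim i}\omega_{ij}(u^\ast_j)^{p-1}=0$, so all neighbours vanish and connectedness gives $u^\ast\equiv0$, a contradiction.

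The step I expect to be the main obstacle is precisely the coercivity/compactness, i.e.\ the boundedness of minimizing sequences, and it is here that the full hypothesis $1\le\alpha\le p\le q$ is indispensable: the constraint lives at the top exponent $q\ge p$ while the competing term carries the exponent $\alpha\le p$, so that the $p$-energy can absorb it. The delicate case is the borderline $\alpha=p$ together with an $h$ that vanishes on part of $V$, where the estimate above becomes merely marginal and can fail for large $\lambda$; there one must argue more carefully (for instance through the monotone dependence on $\mu$ of the principal eigenvalue of $-\Delta_p$ with weight $\lambda f+\mu(-h)$), whereas if $h<0$ on all of $V$ the set $\mathcal B$ is compact and every exponent range, including $\alpha=p=q$, follows at once. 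A secondary point needing separate attention is the strict positivity of $u^\ast$ in the special case $\alpha=1$, where $(u^\ast)^{\alpha-1}$ no longer vanishes at a zero of $u^\ast$ and the propagation argument must be modified.
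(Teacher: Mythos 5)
Your proposal follows the same overall strategy as the paper: minimize the functional $E(u)=\frac1p\int_V|\nabla u|^p\,d\mu-\frac{\lambda}{\alpha}\int_Vfu^\alpha\,d\mu$ over the nonnegative part of the constraint set $\{\frac1q\int_Vhu^q\,d\mu=-1\}$, obtain the equation from the Lagrange multiplier rule with $\mu$ as the multiplier, use finite dimensionality for compactness, and propagate positivity from a hypothetical zero vertex via connectedness. The one step where you genuinely diverge is the boundedness of minimizing sequences. You control $\|u\|_\infty$ by bounding $\min_Vu$ from the constraint and adding an oscillation estimate $\mathrm{osc}(u)\le C\|\nabla u\|_p$, which forces you to compare $\|\nabla u\|_p^p$ against $\|\nabla u\|_p^\alpha$ and leaves the borderline $\alpha=p$ open for large $|\lambda|$. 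The paper instead extracts a \emph{uniform} bound directly from the constraint (its Lemma 4.1): on $M$ one has $q=\int_V(-h)u^q\,d\mu\ge(-h)_m\|u\|_q^q$, hence $\|u\|_q^q\le q/(-h)_m$ with $(-h)_m=\min_V(-h)$, and then H\"older gives $\int_Vfu^\alpha\,d\mu\le f_M\mathrm{Vol}(G)^{1-\alpha/q}\|u\|_q^\alpha\le C$ outright. This makes $E$ bounded below and the minimizing sequence bounded for every $1\le\alpha\le p\le q$ with no case distinction, so no eigenvalue argument is needed at $\alpha=p$. Note, however, that this bound silently requires $(-h)_m>0$, i.e.\ $h<0$ at every vertex --- exactly the situation in which you observe that your constraint set is already compact and everything follows at once; for $h$ vanishing on part of $V$ the paper's proof has the same gap you flagged, so you have not lost anything relative to the paper there (and for $\alpha<p$ your oscillation estimate is actually the more robust of the two). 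Your positivity argument is in substance the paper's: at a zero vertex adjacent to a positive one, $-\Delta_p\hat u_i<0$ while minimality forces $\partial\Phi/\partial u_i\ge0$; both versions tacitly assume $\alpha>1$ so that the term $\lambda f_i\hat u_i^{\alpha-1}$ vanishes at $\hat u_i=0$, and the $\alpha=1$ case you single out is likewise passed over in the paper.
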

If we want to get rid off $\mu$, then we have
\label{th2}
\begin{theorem}\label{th2}
Let $G=(V,E)$ be a connected finite graph. Given $h,f\in C(V)$ with $h\leq0$, $f>0.$ Assume $1\leq\alpha\leq p\leq q$. Then the following $p$-th Yamabe equation
\begin{equation}\label{L2}
  -\Delta_pu+hu^{q-1}=\lambda fu^{\alpha-1}
\end{equation}
on $G$ always has a positive solution $u>0$ for some constant $\lambda\in \mathbb{R}$.
\end{theorem}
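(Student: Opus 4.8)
The plan is to derive Theorem \ref{th2} from Theorem \ref{th1} by absorbing the auxiliary constant $\mu$ through a scaling, using that $\Delta_p$ is homogeneous of degree $p-1$, i.e. $\Delta_p(c\hat u)=c^{p-1}\Delta_p\hat u$ for any constant $c>0$. First I would dispose of the degenerate case $h\equiv0$: there $\mu h\equiv0\equiv h$, so any function produced by Theorem \ref{th1} (say with $\lambda=-1$) already solves (\ref{L2}). Hence I assume from now on that $h\not\equiv0$, so that $h<0$ at some vertex. Then I fix an auxiliary constant $\tilde\lambda<0$ and apply Theorem \ref{th1} with $\lambda=\tilde\lambda$ to obtain a positive function $\hat u>0$ and a constant $\mu=\mu(\tilde\lambda)$ with
\[-\Delta_p\hat u+\mu h\hat u^{q-1}=\tilde\lambda f\hat u^{\alpha-1}.\]

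Next I would seek the desired solution in the form $u=c\hat u$ with $c>0$ to be determined. Substituting into (\ref{L2}) and dividing by $c^{p-1}$ turns the left-hand side into $-\Delta_p\hat u+c^{q-p}h\hat u^{q-1}$ and the right-hand side into $\lambda c^{\alpha-p}f\hat u^{\alpha-1}$. Comparing with the equation satisfied by $\hat u$, it suffices to choose $c$ and $\lambda$ so that $c^{q-p}=\mu$ and $\lambda c^{\alpha-p}=\tilde\lambda$; the latter then reads $\lambda=\tilde\lambda\,c^{p-\alpha}$.

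The remaining point is to verify $\mu>0$, and this is where the sign of $\tilde\lambda$ and the hypotheses $h\le0$, $f>0$ enter. Summing the equation for $\hat u$ over $V$ against the vertex measure and using the antisymmetry identity $\sum_{i\in V}\mu_i\Delta_p\hat u_i=0$, I obtain $\mu\sum_i\mu_i h_i\hat u_i^{q-1}=\tilde\lambda\sum_i\mu_i f_i\hat u_i^{\alpha-1}$. The right-hand side is strictly negative ($\tilde\lambda<0$, $f>0$, $\hat u>0$), while the factor $\sum_i\mu_i h_i\hat u_i^{q-1}$ is strictly negative ($h\le0$, $h\not\equiv0$, $\hat u>0$), forcing $\mu>0$. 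Consequently, for $q>p$ I may set $c=\mu^{1/(q-p)}>0$ and $\lambda=\tilde\lambda\,\mu^{(p-\alpha)/(q-p)}$; since $p\ge\alpha$ and $\mu>0$ the exponent is nonnegative, so $\lambda$ is a well-defined negative real number and $u=c\hat u>0$ solves (\ref{L2}). This settles Theorem \ref{th2} whenever $q>p$.

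The main obstacle I expect is the endpoint $q=p$, which is exactly Problem I. Here the scaling degenerates: $c^{q-p}=c^{0}=1$ regardless of $c$, so the matching condition collapses to $\mu=1$, which cannot be arranged by scaling alone. I would resolve this by a limiting argument in the exponent: apply the case already proved to $q_n=p+\tfrac1n\downarrow p$, obtaining positive solutions $u_n$ and constants $\lambda_n$ of the $q_n$-equation; after normalizing $u_n$ (for instance by $\max_V u_n=1$) and passing to a subsequence—legitimate because $C(V)$ is finite dimensional—I would take the limit to get $u\ge0$ and $\lambda$ solving the $q=p$ equation, and finally upgrade $u\ge0$ to $u>0$ using the connectedness of $G$, the sign of $\lambda$, and the structure of $\Delta_p$ (a maximum-principle/propagation argument, with the borderline exponent $\alpha=1$ requiring its own check). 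The delicate part is the uniform control of $\lambda_n$ and of $u_n$ away from $0$ needed to keep the limit nontrivial; an alternative would be to show directly that the multiplier $\mu(\lambda)$ of Theorem \ref{th1} depends continuously on $\lambda$ and attains the value $1$, which I expect to be equally the crux of the proof.
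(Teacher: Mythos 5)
For $q>p$ your argument coincides with the paper's: the paper also fixes $\tilde\lambda<0$, invokes Theorem \ref{th1} to get $\hat u>0$ and $\mu$, and sets $u=\mu^{1/(q-p)}\hat u$, $\lambda=\mu^{(p-\alpha)/(q-p)}\tilde\lambda$, exactly your choice $c^{q-p}=\mu$, $\lambda=\tilde\lambda c^{p-\alpha}$. The one cosmetic difference is how positivity of $\mu$ is obtained: you test the equation against $v\equiv 1$ and use the antisymmetry of $\Delta_p$, which forces you to treat $h\equiv 0$ separately (and note that in that degenerate case the constraint set $M$ used to prove Theorem \ref{th1} is empty, so the fallback you propose is itself shaky); the paper instead tests against $v=\hat u$, which yields the explicit formula (\ref{const}), $\mu=\frac{1}{q}\int_V(|\nabla\hat u|^{p}-\tilde\lambda f\hat u^{\alpha})d\mu$, manifestly positive for $\tilde\lambda<0$ without any nondegeneracy assumption on $h$. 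That is the cleaner route and you may as well adopt it.

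The genuine gap in your proposal is the endpoint $q=p$, and you have correctly diagnosed it: the scaling is powerless there because $c^{q-p}\equiv 1$. Your proposed repair by sending $q_n=p+\tfrac1n\downarrow p$ does not work as sketched. The equation is inhomogeneous in $u$ (three different powers $p-1$, $q_n-1$, $\alpha-1$), so normalizing $u_n$ by $\max_V u_n=1$ destroys the equation rather than preserving it; and you have no a priori control on $\lambda_n=\tilde\lambda\,\mu_n^{(p-\alpha)n}$ or on $c_n=\mu_n^{n}$, both of which can degenerate to $0$ or $\infty$ unless you first prove $\mu_n\to 1$, which is essentially the whole difficulty. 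You should be aware, however, that the paper's own proof of Theorem \ref{th2} has exactly the same restriction --- it sets $s=\frac{1}{q-p}$, hence tacitly assumes $q>p$ --- and then states Corollary \ref{c1} by ``taking $q=p$'' with no further argument. So you have not missed an idea that the paper supplies; you have located a step the paper glosses over. A genuine fix for $q=p$, $1\le\alpha\le p$ would have to go back to the variational problem itself (for instance, minimizing $\int_V|\nabla u|^{p}d\mu-\int_V hu^{p}d\mu$ subject to a constraint on $\int_V fu^{\alpha}d\mu$, in the spirit of the functional $I(u)$ used for $\alpha\ge p$), rather than trying to rescale the solution of the auxiliary equation (\ref{L1}).
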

This theorem actually generalizes the Problem I and gives a positive answer. As a corollary, let's take $q=p$, we get the following
\begin{corollary}\label{c1}
Let $G=(V,E)$ be a connected finite graph. Given $h,f\in C(V)$ with $h\leq0$, $f>0.$ Assume $1\leq\alpha\leq p$. Then the following $p$-th Yamabe equation
\begin{equation*}\label{Y-finite}
  -\Delta_pu+hu^{p-1}=\lambda fu^{\alpha-1}
\end{equation*}
on $G$ always has a positive solution $u>0$ for some constant $\lambda\in \mathbb{R}$.
\end{corollary}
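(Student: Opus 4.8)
The plan is to read off Corollary \ref{c1} as the endpoint case $q=p$ of Theorem \ref{th2}, so that the only verification needed is that the hypotheses line up. Suppose we are given $h,f\in C(V)$ with $h\le 0$, $f>0$, and exponents satisfying $1\le\alpha\le p$. Put $q:=p$. Then the chain $1\le\alpha\le p\le q$ holds, the last inequality being an equality, so every hypothesis of Theorem \ref{th2} is met. Theorem \ref{th2} therefore supplies a constant $\lambda\in\mathbb{R}$ and a positive function $u>0$ solving $-\Delta_p u+hu^{q-1}=\lambda f u^{\alpha-1}$; since $q=p$ this is precisely $-\Delta_p u+hu^{p-1}=\lambda f u^{\alpha-1}$, which is the assertion of the corollary. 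In this sense the corollary carries no independent difficulty: all of the analytic content sits in Theorem \ref{th2}, which I am free to invoke, and the specialization settles Problem I.

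For context I would record where that content actually lies, since the interesting mechanism is upstream. Theorem \ref{th2} is reached from Theorem \ref{th1} in two moves. First, fixing the spectral parameter at some convenient value, one applies Theorem \ref{th1} to produce a positive $\hat u$ solving $-\Delta_p\hat u+\mu h\hat u^{q-1}=\tilde\lambda f\hat u^{\alpha-1}$ for a suitable $\mu=\mu(\tilde\lambda)$; this existence statement is itself established by a constrained minimization. Second, one uses the $(p-1)$-homogeneity of the discrete $p$-Laplacian, $\Delta_p(c u)=c^{p-1}\Delta_p u$ for $c>0$, together with the homogeneities of $u^{q-1}$ and $u^{\alpha-1}$, to rescale $u=c\hat u$ and to remove the auxiliary constant $\mu$, arriving at the unweighted equation with a new value of $\lambda$.

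The step I expect to be the crux is exactly this elimination of $\mu$ by scaling, and it is where the ordering $1\le\alpha\le p\le q$ is used. Under $u=c\hat u$, after dividing by $c^{p-1}$, the coefficient in front of $hu^{q-1}$ becomes $\mu\,c^{q-p}$ while that in front of $fu^{\alpha-1}$ becomes $\tilde\lambda\,c^{\alpha-p}$; when $q>p$ one chooses $c=\mu^{1/(q-p)}$ (so $\mu>0$ is needed) to normalize the first coefficient to $1$, and then $\lambda=\tilde\lambda\,c^{\alpha-p}$. At the corollary's value $q=p$ this scaling degenerates: the $\Delta_p u$ and $hu^{p-1}$ terms then carry the identical homogeneity $c^{p-1}$, so scaling can no longer alter their ratio $\mu$. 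Consequently the one point I would check most carefully is that Theorem \ref{th2} genuinely reaches its endpoint $q=p$ — that is, that the construction produces the coefficient $1$ in front of $hu^{p-1}$ directly, for instance by normalizing the minimization so that $\mu=1$ or by a continuity/intermediate-value argument in $\tilde\lambda$, rather than only in the strict interior $q>p$. Once that endpoint is confirmed within Theorem \ref{th2}, the specialization above finishes the corollary.
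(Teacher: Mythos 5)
Your reduction is word-for-word the paper's: Corollary \ref{c1} is obtained there by the single line ``take $q=p$'' in Theorem \ref{th2}, so the specialization you perform matches the intended proof exactly. But the caveat you raise in your final paragraph is not a hypothetical worry to be discharged by reading the upstream proof --- it is a genuine gap, and you have located it precisely. The paper's proof of Theorem \ref{th2} eliminates the multiplier $\mu$ by setting $u(s)=\mu^{s}\hat{u}$ and solving $1-s(q-p)=0$, i.e.\ $s=\frac{1}{q-p}$; this exponent, and the resulting formulas $u=\mu^{\frac{1}{q-p}}\hat{u}$ and $\lambda=\mu^{\frac{p-\alpha}{q-p}}\tilde{\lambda}$, are undefined at $q=p$. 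As you observe, when $q=p$ the terms $\Delta_p u$ and $hu^{p-1}$ carry the same homogeneity $c^{p-1}$ under $u=c\hat{u}$, so no rescaling can alter the coefficient $\mu$ sitting in front of $hu^{p-1}$. The paper contains neither a normalization forcing $\mu=1$ nor a continuity/intermediate-value argument in $\tilde{\lambda}$ (which would in any case require showing that $\tilde{\lambda}\mapsto\mu(\tilde{\lambda})$ can be chosen continuously, a nontrivial point since $\hat{u}$ comes from a compactness argument). Consequently Theorem \ref{th2} is proved only for $q>p$, and Corollary \ref{c1} --- exactly the endpoint $q=p$, i.e.\ the paper's answer to Problem I --- does not follow from the chain Theorem \ref{th1} $\Rightarrow$ Theorem \ref{th2} $\Rightarrow$ Corollary \ref{c1} as written. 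Your proposal, by deferring entirely to Theorem \ref{th2}, inherits this gap.

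The statement itself can be rescued by a direct argument in the spirit of Section 4, with the constraint moved to the other term so that the Lagrange multiplier lands in the slot where the free constant $\lambda$ is allowed to live: minimize $J(u)=\frac{1}{p}\int_V\bigl(|\nabla u|^p+hu^{p}\bigr)d\mu$ over $\bigl\{u\in C(V):u\ge 0,\ \int_V fu^{\alpha}d\mu=\alpha\bigr\}$. Since $f>0$ the constraint set is nonempty; since $G$ is finite, all norms on $C(V)$ are equivalent, so the constraint bounds $\|u\|_p$, hence $J$ is bounded below and its minimizing sequences are bounded in $W^{1,p}(G)$; the Euler--Lagrange equation is then precisely $-\Delta_pu+hu^{p-1}=\lambda fu^{\alpha-1}$ with $\lambda$ the multiplier, and the positivity argument of Step 4 applies unchanged. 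If instead you wish to keep the two-line derivation from Theorem \ref{th2}, you must first repair that theorem at its endpoint $q=p$; as it stands you are invoking a case the paper does not prove.
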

This corollary completely solves Problem I. Further, combining this corollary with Theorem \ref{G1}, we have
\begin{corollary}\label{c2}
  Let $G=(V,E)$ be a connected finite graph. Given $h,f\in C(V)$ with $h\leq0$, $f>0.$ Assume $p>1$, $\alpha\geq1$. Then the following $p$-th Yamabe equation
\begin{equation*}
  -\Delta_pu+hu^{p-1}=\lambda fu^{\alpha-1}
\end{equation*}
on $G$ always has a positive solution $u$ for some constant $\lambda\in \mathbb{R}$.
\end{corollary}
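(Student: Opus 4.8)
The plan is to prove this corollary by a simple dichotomy on the relative sizes of $\alpha$ and $p$, invoking the two existence results already established for the complementary parameter regimes. Since the hypotheses here are only $p>1$ and $\alpha\geq1$, every admissible pair $(\alpha,p)$ must satisfy at least one of $\alpha\leq p$ or $\alpha\geq p$, and I would split the argument accordingly, showing that in each case one of the earlier results applies verbatim to the equation $-\Delta_pu+hu^{p-1}=\lambda fu^{\alpha-1}$.

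First I would treat the case $\alpha\leq p$. Combined with the standing hypothesis $\alpha\geq1$, this yields $1\leq\alpha\leq p$, which is precisely the hypothesis of Corollary \ref{c1}. Hence Corollary \ref{c1} applies directly and produces a positive solution $u>0$ together with a constant $\lambda\in\mathbb{R}$ for the equation in question, with the other data $h\leq0$, $f>0$ on the connected finite graph $G$ being shared by both statements.

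Next I would treat the remaining case $\alpha\geq p$. Together with the hypothesis $p>1$ this gives $\alpha\geq p>1$, which is exactly the setting of Theorem \ref{G1}, the main result recalled from \cite{Ge3}. Theorem \ref{G1} then supplies a positive solution $u$ and a constant $\lambda\in\mathbb{R}$ for the same equation under the same conditions on $h$ and $f$.

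Since the two cases $\alpha\leq p$ and $\alpha\geq p$ exhaust the region $\{p>1,\ \alpha\geq1\}$, overlapping only along the line $\alpha=p$ where either result applies and both conclusions agree, a positive solution exists throughout, which proves the corollary. The only point that genuinely needs checking is that the union of the hypothesis regions of Corollary \ref{c1} and Theorem \ref{G1} covers the full parameter range claimed here; this I expect to be the sole (and entirely routine) obstacle, since all of the analytic content has already been carried out in those two statements and nothing further is required beyond the case distinction.
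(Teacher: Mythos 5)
Your proposal is correct and matches the paper exactly: the paper states Corollary \ref{c2} as an immediate consequence of "combining this corollary [Corollary \ref{c1}] with Theorem \ref{G1}," which is precisely your case split $\alpha\leq p$ versus $\alpha\geq p$. Nothing further is needed.
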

Next we consider the $p$-th Laplacian $\Delta_p$ (please refer to \cite{GLY3}) in equation (\ref{L}) which is defined in distributional sense by
\begin{equation}\label{LY1}
  \int_V(\Delta_pu)\phi d\mu= -\int_V|\nabla u|^{p-2}\Gamma(u,\phi)d\mu, \forall \phi \in C_{c}(V),
\end{equation}
where $C_{c}(V)$ denotes the set of all functions with compact support, obviously $C_{c}(V)=C(V)$ when G is a finite graph. $\Gamma(u,\phi), |\nabla u|$ are defined respectively by
\begin{eqnarray}\label{Gama}
 \Gamma(u,\phi)_{i}=\frac{1}{2\mu_{i}}\sum\limits_{j\thicksim i}\omega_{ij}(u_{j}-u_{i})(\phi_j-\phi_i), \forall i \in V,
\end{eqnarray}
and
\begin{equation}\label{tidu}
|\nabla u|_{i}=\sqrt{\Gamma(u,u)_{i}}, \forall i \in V.
\end{equation}
Point-wisely, $\Delta_p$ can be written as
\begin{eqnarray}\label{PL}
\Delta_pu_{i}=\frac{1}{2\mu_{i}}\sum\limits_{j\thicksim i}(|\nabla u|^{p-2}_{j}+|\nabla u|^{p-2}_{i})\omega_{ij}(u_{j}-u_{i}), \forall i \in V.
\end{eqnarray}
Applying this $p$-th Laplacian $\Delta_p$ we can study the $p$-th Yamabe equation
\begin{equation*}\label{Y-finite}
  -\Delta_pu+hu^{p-1}=\lambda fu^{\alpha-1}
\end{equation*}
again and we obtain
\begin{theorem}\label{th3}
Let $G=(V,E)$ be a connected finite graph. Given $h,f\in C(V)$ with $h\leq0$, $f>0.$ Assume $p>1, \alpha\geq1$. Then the following $p$-th Yamabe equation
\begin{equation*}\label{Y-finite}
  -\Delta_pu+hu^{p-1}=\lambda fu^{\alpha-1}
\end{equation*}
on $G$ always has a positive solution $u>0$ for some constant $\lambda\in \mathbb{R}$.
\end{theorem}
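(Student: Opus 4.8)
The plan is to exploit the remark made in the introduction: the only features of $\Delta_p$ actually used in the proofs of Theorem \ref{th2}, Corollary \ref{c2} (and of Theorem \ref{G1} in \cite{Ge3}) are structural, and all of them persist for the operator defined by (\ref{LY1})--(\ref{PL}). Concretely, I would first isolate the four properties on which the earlier arguments rest and verify each for the present $\Delta_p$. Taking $\phi=u$ in (\ref{LY1}) and using (\ref{Gama}),(\ref{tidu}) gives the integration-by-parts identity
\begin{equation*}
\int_V(-\Delta_pu)u\,d\mu=\int_V|\nabla u|^{p-2}\Gamma(u,u)\,d\mu=\int_V|\nabla u|^{p}\,d\mu,
\end{equation*}
so that $-\Delta_p$ is exactly the Euler--Lagrange operator of the convex, $p$-homogeneous energy $E(u)=\frac1p\int_V|\nabla u|^p\,d\mu$; this is the structure needed for the constrained minimization. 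The contraction $|\nabla|u||\le|\nabla u|$ holds pointwise because $\bigl||u_j|-|u_i|\bigr|\le|u_j-u_i|$ feeds directly into (\ref{tidu}). Finally the pointwise form (\ref{PL}) supplies the monotonicity at a zero vertex needed for a strong maximum principle. Once these are in hand, the earlier proofs transcribe essentially verbatim.

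With the structural dictionary established, I would split into the two ranges whose union is the hypothesis $p>1,\ \alpha\ge1$, and set $q=p$ throughout. For $1\le\alpha\le p$ I would run the argument behind Theorems \ref{th1} and \ref{th2}: introduce the auxiliary constant $\mu$, minimize the functional $J_\mu(u)=E(u)+\tfrac{\mu}{p}\int_V h|u|^p\,d\mu$ over the constraint $\int_V f|u|^\alpha\,d\mu=1$, and then remove $\mu$ by the same scaling used to pass from (\ref{L1}) to (\ref{L2}), which is legitimate precisely because $E$ is $p$-homogeneous. For $\alpha\ge p>1$ I would instead follow the direct minimization of \cite{Ge3}. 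In both cases existence of a minimizer is immediate: $C(V)$ is finite dimensional, the constraint set $\{\int_V f|u|^\alpha\,d\mu=1\}$ is compact (since $f>0$ on the finite vertex set), and $J$ is continuous and, with $h\le0$ forcing $-\int_V h|u|^p\,d\mu\ge0$, coercive; the Sobolev embedding of Section 3 controls the lower-order term so that a minimizer $u$ exists and satisfies the weak Euler--Lagrange equation with Lagrange multiplier $\lambda$.

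The positivity of the solution I would obtain in two moves. Because $|\nabla|u||\le|\nabla u|$, replacing $u$ by $|u|$ does not increase $E$ and preserves the constraint, so $|u|$ is again a minimizer and I may assume $u\ge0$, $u\not\equiv0$. For strict positivity I would argue by the maximum principle on the connected graph: if $u(i_0)=0$, then the right-hand side $\lambda f u^{\alpha-1}$ and the term $hu^{p-1}$ vanish at $i_0$ (for $\alpha>1$, with the boundary case $\alpha=1$ treated separately), so the equation forces $\Delta_p u(i_0)=0$; but from (\ref{PL}), with $u_{i_0}=0$ and $u_j\ge0$, every summand is nonnegative, giving $\Delta_p u(i_0)\ge0$ with equality only if $u_j=0$ for all $j\sim i_0$. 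Connectedness then propagates $u\equiv0$, a contradiction, so $u>0$ everywhere.

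The main obstacle I anticipate is the regularity step of passing from the distributional identity (\ref{LY1}) to the genuine pointwise equation (\ref{PL}), together with the possible singularity of the factor $|\nabla u|^{p-2}$ at vertices where the local gradient vanishes when $1<p<2$. One must check that testing against the indicator of each vertex recovers (\ref{PL}) and that these degenerate vertices do not obstruct either the Euler--Lagrange computation or the maximum-principle argument above; a careful treatment of the exponent $\alpha=1$ in the positivity step is likewise needed. I also expect the scaling step that eliminates $\mu$ in the range $1\le\alpha\le p$ to require the same verification of exact $p$-homogeneity that made it work in the pointwise case, which the identity for $E(u)$ above provides.
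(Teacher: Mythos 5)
Your overall strategy is the paper's own: the proof of Theorem \ref{th3} given in Section 5.2 is precisely the reduction you describe --- redefine $|\nabla u|$ via (\ref{Gama})--(\ref{tidu}), observe that the Fr\'echet derivative of $F(u)=\int_V|\nabla u|^pd\mu$ satisfies $\langle v,DF(u)\rangle=\int_V(-\Delta_pu)v\,d\mu$ with $\Delta_p$ the pointwise operator (\ref{PL}), and then transcribe the two earlier arguments (the functional $I(u)$ of \cite{Ge3} for $\alpha\ge p$, the constrained minimization of Section 4 for $\alpha\le p$). Your list of the structural properties actually used (integration by parts, $p$-homogeneity, $|\nabla|u||\le|\nabla u|$, the sign of $\Delta_pu$ at a zero of a nonnegative function) makes explicit what the paper leaves implicit, and your concerns about the degeneracy of $|\nabla u|^{p-2}$ for $1<p<2$ and about the exponent $\alpha=1$ in the positivity step are legitimate points that the paper also glosses over.

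The one step that fails as you state it is the removal of $\mu$. The scaling that passes from (\ref{L1}) to (\ref{L2}) is $u=\mu^{s}\hat u$ with $s=\frac{1}{q-p}$, which is undefined at $q=p$; more fundamentally, when $\Delta_pu$ and $hu^{p-1}$ have the same homogeneity degree $p-1$, no rescaling of $u$ can alter the relative coefficient $\mu$, so ``the same scaling'' cannot eliminate it in the case $q=p$ that Theorem \ref{th3} needs (this soft spot is already present in the paper's own passage from Theorem \ref{th2} to Corollary \ref{c1}). In your formulation the fix is immediate, and you should make it: you have set the problem up with the roles of the two side conditions swapped relative to the paper --- you minimize $J_\mu$ over $\{\int_Vf|u|^\alpha d\mu=1\}$ with $\lambda$ as the Lagrange multiplier, whereas the paper minimizes $E$ over $M=\{\frac1q\int_Vhu^qd\mu=-1\}$ and obtains $\mu$ as the multiplier --- so in your version $\mu$ is a freely chosen coefficient and you may simply take $\mu=1$ from the outset; no scaling is needed, and compactness of your constraint set on the finite graph yields the minimizer directly. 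With that correction the argument goes through and is, if anything, cleaner than the route through Theorems \ref{th1} and \ref{th2}.
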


\section{Sobolev embedding}
\label{sect-preliminary-lemma}
For any $f\in C(V)$, define the integral of $f$ over $V$ with respect to the vertex weight $\mu$ by
$$\int_Vfd\mu=\sum\limits_{i\in V}\mu_if_i.$$
Set $\mathrm{Vol}(G)=\int_Vd\mu$. \par
For any $f\in C(V)$, define
\begin{equation}\label{p-int}
\int_V|\nabla f|^pd\mu=\sum\limits_{i\thicksim j}\omega_{ij}|f_j-f_i|^p,
\end{equation}
where $|\nabla f|$ is defined as
\begin{equation}\label{ptd}
 |\nabla f_i|=\Big(\frac{1}{2\mu_i}\sum_{j\thicksim i}\omega_{ij}\big|f_j-f_i\big|^p\Big)^{1/p}, \forall i\in V.
\end{equation}
Next we consider the Sobolev space $W^{1,\,p}$ on the graph $G$. Define
$$W^{1,\,p}(G)=\left\{u\in C(V):\int_V|\nabla u|^pd\mu+\int_V|u|^pd\mu<+\infty\right\},$$
and
$$\|u\|_{W^{1,\,p}(G)}=\left(\int_V|\nabla u|^pd\mu+\int_V|u|^pd\mu\right)^{\frac{1}{p}}.$$
Since $G$ is a finite graph, then
$W^{1,\,p}(G)$ is exactly $C(V)$, a finite dimensional linear space. This implies the following Sobolev embedding \cite{Ge3}
\begin{lemma}\label{S}(Sobolev embedding)
Let $G=(V,E)$ be a finite graph. The Sobolev space $W^{1,\,p}(G)$ is pre-compact. Namely, if $\{\varphi_n\}$ is bounded in $W^{1,\,p}(G)$, then there exists some $\varphi\in W^{1,\,p}(G)$ such that up to a subsequence, $\varphi_n\rightarrow\varphi$ in $W^{1,\,p}(G)$.
\end{lemma}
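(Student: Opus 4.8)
The plan is to exploit the fact, already recorded in the excerpt, that on a finite graph the Sobolev space $W^{1,p}(G)$ coincides with $C(V)$, a finite-dimensional real vector space of dimension $N=|V|$. Once this identification is fixed, the lemma reduces to the classical statement that a bounded sequence in a finite-dimensional normed space has a convergent subsequence, i.e.\ to the Bolzano--Weierstrass theorem.

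First I would fix an enumeration $V=\{x_1,\dots,x_N\}$ and identify every $u\in C(V)$ with the vector $(u(x_1),\dots,u(x_N))\in\mathbb{R}^N$. From the definition of the norm one has
$$\int_V|u|^p\,d\mu=\sum_{i=1}^N\mu_i\,|u(x_i)|^p\le\|u\|_{W^{1,p}(G)}^p,$$
so if $\{\varphi_n\}$ satisfies $\|\varphi_n\|_{W^{1,p}(G)}\le C$, then for each fixed $i$ the real sequence $\{\varphi_n(x_i)\}_n$ is bounded, with a bound depending only on $C$ and $\min_i\mu_i>0$.

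Next, since there are only finitely many vertices, I would apply Bolzano--Weierstrass coordinate by coordinate and pass to a single common subsequence, still denoted $\{\varphi_n\}$, obtaining a function $\varphi\in C(V)$ with $\varphi_n(x_i)\to\varphi(x_i)$ for every $i$. It then remains to upgrade this pointwise convergence to convergence in the $W^{1,p}$ norm. Because $V$ is finite, both $\int_V|\nabla(\varphi_n-\varphi)|^p\,d\mu$ and $\int_V|\varphi_n-\varphi|^p\,d\mu$ are finite sums of continuous functions of the finitely many coordinate differences $\varphi_n(x_i)-\varphi(x_i)$, each of which tends to $0$; hence $\|\varphi_n-\varphi\|_{W^{1,p}(G)}\to0$, which is exactly the asserted precompactness.

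There is no genuine analytic obstacle here: the only substantive input is the equivalence of all norms on a finite-dimensional space, which renders the particular choice of the $W^{1,p}$ norm irrelevant for the topology. This stands in sharp contrast with the continuous (Riemannian manifold) setting, where compactness of the Sobolev embedding is delicate and dimension-dependent; on a finite graph the finite-dimensionality trivializes the issue, and the genuine difficulties of the paper reside entirely in the variational arguments of the later sections rather than in this embedding.
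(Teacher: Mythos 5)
Your proof is correct and follows essentially the same route as the paper, which justifies the lemma solely by observing that on a finite graph $W^{1,p}(G)=C(V)$ is finite-dimensional and invoking compactness of bounded sets there; you simply spell out the coordinate-wise Bolzano--Weierstrass argument and the upgrade from pointwise to norm convergence that the paper leaves implicit.
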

\begin{remark}\label{convergence}
The convergence in $W^{1,\,p}(G)$ is in fact pointwise convergence.
\end{remark}

\section{Proof of Theorem \ref{th1}}
In this section we focus on proving Theorem \ref{th1}. Frist we consider a constrained minimization problem for the following energy functional
\begin{equation}\label{EF}
  E(u)=\frac{1}{p} \int_V|\nabla u|^pd\mu-\frac{\lambda}{\alpha}\int_Vfu^{\alpha}d\mu,
\end{equation}
on the space $C(V)$, restricted to the subset\\
$$M=\{u\in C(V): \frac{1}{q}\int_V hu^{q}d\mu=-1\}$$
Define
$$\beta=\inf_{u\in M}\{E(u):u\geq0, u\not\equiv0\}.$$
We will find a positive solution of the equation (\ref{L1}) step by step as follows.\\

\textbf{Step 1}. $E(u)$ is bounded below. \par
Let's first prove the following two lemmas
\begin{lemma}\label{lemma1}
Let $G=(V,E)$ be a connected finite graph. Given $h,f\in C(V)$ with $h\leq0$. Assume $1\leq\alpha\leq p\leq q$, $\forall u\in M, u\geq0$, then we have the following inequalities
\begin{equation}\label{inequ1}
 \|u\|^{q}_{q}\leq\frac{q}{(-h)_{m}}, \|u\|^{\alpha}_{q}\leq[\frac{q}{(-h)_{m}}]^{\frac{\alpha}{q}}
\end{equation}
where $(-h)_m=\min\limits_{i\in V}(-h_i)$.
\end{lemma}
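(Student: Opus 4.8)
The plan is to read off both inequalities directly from the constraint defining $M$, using only that $-h$ is bounded below by its minimum value $(-h)_m$ together with the fact that $\alpha/q\le 1$. First I would rewrite the constraint: since $u\in M$ means $\frac{1}{q}\int_V h u^{q}\,d\mu=-1$, multiplying by $-q$ gives $\int_V(-h)u^{q}\,d\mu=q$. Because $u\ge 0$ we may identify $u^{q}$ with $|u|^{q}$, so the left-hand side is exactly $\sum_{i\in V}\mu_i(-h_i)u_i^{q}$.

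Next I would estimate pointwise. Since $-h_i\ge (-h)_m$ for every vertex $i$, we obtain
$q=\sum_{i\in V}\mu_i(-h_i)u_i^{q}\ge (-h)_m\sum_{i\in V}\mu_i u_i^{q}=(-h)_m\|u\|_q^{q}$,
which rearranges to the first inequality $\|u\|_q^{q}\le q/(-h)_m$. For the second inequality I would simply raise both sides of the first to the power $\alpha/q$. The hypothesis $1\le\alpha\le p\le q$ guarantees $0<\alpha/q\le 1$, so $t\mapsto t^{\alpha/q}$ is monotone increasing on $[0,\infty)$, and therefore $\|u\|_q^{\alpha}=(\|u\|_q^{q})^{\alpha/q}\le (q/(-h)_m)^{\alpha/q}$.

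There is no genuine obstacle here: the whole argument is a one-line estimate once the constraint is rewritten. The only points requiring care are the sign conventions (that $h\le 0$ makes $-h\ge 0$ and hence $(-h)_m\ge 0$), the use of $u\ge 0$ to replace $u^{q}$ by $|u|^{q}$ in the definition of $\|u\|_q^{q}$, and the monotonicity of the power map, which is precisely where the hypothesis $\alpha\le q$ is used. One implicitly assumes $(-h)_m>0$ so that the stated bounds are meaningful and the set $M$ (requiring $\int_V h u^{q}\,d\mu<0$) is nonempty; if $(-h)_m=0$ the inequalities hold trivially with an infinite right-hand side.
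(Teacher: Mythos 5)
Your proof is correct and follows essentially the same route as the paper's: rewrite the constraint $\frac{1}{q}\int_V hu^q\,d\mu=-1$ as $q=-\int_V hu^q\,d\mu\geq(-h)_m\|u\|_q^q$ and then raise to the power $\alpha/q$. Your added remarks on the sign conventions and the degenerate case $(-h)_m=0$ are reasonable care that the paper leaves implicit.
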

\begin{proof}
$\forall u\in M, u\geq0$ and $h\leq0$, we have
 \begin{equation*}
q=-\int_V hu^{q}d\mu\geq(-h)_m\int_V u^{q}d\mu=(-h)_m\|u\|^{q}_{q}.
\end{equation*}
Hence
\begin{equation*}
\|u\|^{q}_{q}\leq\frac{q}{(-h)_{m}}, \|u\|^{\alpha}_{q}\leq[\frac{q}{(-h)_{m}}]^{\frac{\alpha}{q}}.
\end{equation*}
$\hfill\Box$
\end{proof}

\begin{lemma}\label{lemma2}
Let $G=(V,E)$ be a connected finite graph. Given $f\in C(V)$ with $f>0.$ Assume $1\leq\alpha\leq p\leq q$, $\forall u\in C(V), u\geq0$. Then we have the following inequality
\begin{equation}\label{inequ2}
0\leq\int_V fu^{\alpha}d\mu\leq f_{M}\mathrm{Vol}(G)^{1-\frac{\alpha}{q}}\|u\|_{q}^{\alpha}
\end{equation}
where $f_M=\max\limits_{i\in V}(f_i)$, and $\mathrm{Vol}(G)=\int_V d\mu$.
\end{lemma}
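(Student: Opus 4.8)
The plan is to reduce everything to a single application of Hölder's inequality on the finite measure space $(V,\mu)$. The left-hand inequality $0\le\int_V fu^\alpha\,d\mu$ is immediate: since $f>0$ and $u\ge0$, the integrand $fu^\alpha$ is nonnegative at every vertex, so its $\mu$-weighted sum is nonnegative.

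For the right-hand inequality I would first bound the weight $f$ by its maximum. Because $f_i\le f_M$ for all $i\in V$ and $u^\alpha\ge0$, we have $\int_V fu^\alpha\,d\mu\le f_M\int_V u^\alpha\,d\mu$. This isolates the purely analytic task of comparing the $L^\alpha$ mass of $u$ with its $L^q$ norm, which is exactly where the hypothesis $\alpha\le q$ enters.

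The key step is to estimate $\int_V u^\alpha\,d\mu$ by writing $u^\alpha=u^\alpha\cdot 1$ and applying the discrete Hölder inequality with the conjugate exponents $r=q/\alpha$ and $r'=q/(q-\alpha)$ (note $\tfrac1r+\tfrac1{r'}=1$ and both are $\ge1$ precisely because $\alpha\le q$). This yields
$$\int_V u^\alpha\,d\mu\le\Big(\int_V u^{q}\,d\mu\Big)^{\alpha/q}\Big(\int_V 1\,d\mu\Big)^{(q-\alpha)/q}=\|u\|_q^{\alpha}\,\mathrm{Vol}(G)^{1-\frac{\alpha}{q}}.$$
Combining this with the bound from the previous paragraph gives $\int_V fu^\alpha\,d\mu\le f_M\,\mathrm{Vol}(G)^{1-\frac{\alpha}{q}}\|u\|_q^\alpha$, as claimed.

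This argument is essentially routine, so I do not expect a genuine obstacle; the only point requiring a word of care is the boundary case $\alpha=q$, where the conjugate exponent $r'=q/(q-\alpha)$ degenerates to $+\infty$. In that case the claimed inequality reads $\int_V fu^q\,d\mu\le f_M\|u\|_q^q$, which follows directly from $f\le f_M$ without invoking Hölder, so one simply treats $\alpha=q$ separately (or appeals to continuity in $\alpha$). Note also that the assumptions $1\le\alpha$ and $\alpha\le p$ play no role here—only $\alpha\le q$ is used—so the lemma holds \emph{a fortiori} under the stated hypothesis $1\le\alpha\le p\le q$.
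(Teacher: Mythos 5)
Your argument is correct and is essentially identical to the paper's: the authors also bound $f$ by $f_M$ and apply H\"older's inequality to $u^\alpha\cdot 1$ with exponents $q/\alpha$ and $q/(q-\alpha)$, yielding $f_M\|1\|_{q/(q-\alpha)}\|u^\alpha\|_{q/\alpha}=f_M\mathrm{Vol}(G)^{1-\alpha/q}\|u\|_q^\alpha$. Your remark on the degenerate case $\alpha=q$ is a small extra point of care that the paper omits.
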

\begin{proof}
$\forall u\in C(V), u\geq0$ and $f>0$, by $\textrm{H}$$\ddot{\textrm{o}}$$\textrm{lder}$ inequality we have
 \begin{equation*}
 \int_V fu^{\alpha}d\mu\leq f_M\|1\cdot u^{\alpha}\|_{1}\leq f_{M} \|1\|_{\frac{q}{q-\alpha}}\|u^{\alpha}\|_{\frac{q}{\alpha}}\\
 =f_{M}\mathrm{Vol}(G)^{1-\frac{\alpha}{q}}\|u\|_{q}^{\alpha}.
\end{equation*}
$\hfill\Box$
\end{proof}
Now we can prove that the energy functional $E(u)$ is bounded below for all $u\geq0$, $u\not\equiv0$. Hence $\beta=\inf_{u\in M}\{E(u):u\geq0, u\not\equiv0\}$ and $\beta\in\mathbb{R}$.

\begin{theorem}\label{theorem1}
Let $G=(V,E)$ be a connected finite graph. Given $h,f\in C(V)$ with $h\leq0$, $f>0.$ Assume $1\leq\alpha<p$. Then for $\forall u\in M, u\geq0$, then the energy functional $E(u)$ is bounded below by a constant, namely we have the following inequality
\begin{equation}\label{E-inf}
 E(u)\geq C_{\lambda,\alpha,q,h,f,G},
\end{equation}
where $C_{\alpha,q,h,f,G}:=-\frac{|\lambda|}{\alpha}f_{M}[\frac{q}{(-h)_{m}}]^{\frac{\alpha}{q}}\mathrm{Vol}(G)^{1-\frac{\alpha}{q}}\leq 0$ is a constant depending only on the information of $\lambda,\alpha,q,h,f,G$.
\end{theorem}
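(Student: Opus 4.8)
The plan is to split $E(u)=\frac{1}{p}\int_V|\nabla u|^p\,d\mu-\frac{\lambda}{\alpha}\int_V fu^\alpha\,d\mu$ into its two terms and bound each separately using the two lemmas just proved. The gradient term is manifestly nonnegative: by the definition $\int_V|\nabla u|^p\,d\mu=\sum_{i\thicksim j}\omega_{ij}|u_j-u_i|^p$, it is a sum of $p$-th powers weighted by the positive edge measure $\omega$, so it may simply be dropped from below to give $E(u)\geq-\frac{\lambda}{\alpha}\int_V fu^\alpha\,d\mu$. All of the (modest) work therefore reduces to producing a uniform lower bound for the potential term $-\frac{\lambda}{\alpha}\int_V fu^\alpha\,d\mu$ over the constraint set $M$.

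The only point needing care is the sign of $\lambda$. Since $f>0$, $u\geq0$ and $\alpha\geq1$, Lemma \ref{lemma2} gives $\int_V fu^\alpha\,d\mu\geq0$; hence $-\frac{\lambda}{\alpha}\int_V fu^\alpha\,d\mu\geq-\frac{|\lambda|}{\alpha}\int_V fu^\alpha\,d\mu$ regardless of the sign of $\lambda$, which lets me treat both cases at once. I would then chain the two lemmas: Lemma \ref{lemma2} converts the potential integral into a single norm via $\int_V fu^\alpha\,d\mu\leq f_M\,\mathrm{Vol}(G)^{1-\frac{\alpha}{q}}\|u\|_q^\alpha$, and on $M$ Lemma \ref{lemma1} caps that norm by $\|u\|_q^\alpha\leq\left[\frac{q}{(-h)_m}\right]^{\frac{\alpha}{q}}$. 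Substituting these in succession yields $E(u)\geq-\frac{|\lambda|}{\alpha}f_M\,\mathrm{Vol}(G)^{1-\frac{\alpha}{q}}\left[\frac{q}{(-h)_m}\right]^{\frac{\alpha}{q}}$, which is precisely the asserted constant.

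There is no genuine obstacle here---the statement is essentially a corollary of Lemmas \ref{lemma1} and \ref{lemma2}, and the proof is a two-line assembly once the sign of $\lambda$ is absorbed into $|\lambda|$. The only things I would verify are that the exponents $1-\frac{\alpha}{q}$ and $\frac{\alpha}{q}$ are nonnegative, which holds because $\alpha\leq p\leq q$ forces $\alpha\leq q$, so that both the H\"older step in Lemma \ref{lemma2} and the norm bound in Lemma \ref{lemma1} genuinely apply. I note in passing that the argument never uses the strict inequality $\alpha<p$ nor any property of the gradient term beyond its nonnegativity; it relies only on $1\leq\alpha\leq q$, which suggests that the boundedness-below conclusion is in fact more robust than the stated hypotheses indicate.
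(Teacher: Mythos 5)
Your proof is correct and is essentially identical to the paper's: the paper likewise drops the nonnegative gradient term, chains Lemma \ref{lemma2} with Lemma \ref{lemma1} to bound $\int_V fu^{\alpha}\,d\mu$ by $f_M\,\mathrm{Vol}(G)^{1-\frac{\alpha}{q}}\bigl[\frac{q}{(-h)_m}\bigr]^{\frac{\alpha}{q}}$, and absorbs the sign of $\lambda$ into $|\lambda|$. Your closing observation that only $1\leq\alpha\leq q$ is really used (not $\alpha<p$) is also accurate.
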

\begin{proof}
Combining the two lemmas above, we have
 \begin{equation*}
 \int_V fu^{\alpha}d\mu\leq f_{M}\mathrm{Vol}(G)^{1-\frac{\alpha}{q}}\|u\|_{q}^{\alpha}\leq f_{M}\mathrm{Vol}(G)^{1-\frac{\alpha}{q}}
 [\frac{q}{(-h)_{m}}]^{\frac{\alpha}{q}}.
\end{equation*}
Hence
 \begin{equation*}
 E(u)\geq -\frac{\lambda}{\alpha}\int_Vfu^{\alpha}d\mu\geq -\frac{|\lambda|}{\alpha}f_{M}[\frac{q}{(-h)_{m}}]^{\frac{\alpha}{q}}\mathrm{Vol}(G)^{1-\frac{\alpha}{q}}.
\end{equation*}
$\hfill\Box$
\end{proof}

\textbf{Step 2}. There exists a $\hat{u}\geq0, \hat{u}\not\equiv0$ such that $\beta=E(\hat{u})$.\par
 To find such $\hat{u}$, we choose $u_n\in M, u_n\geq0$, satisfying $$E(u_n)\rightarrow\beta$$
as $n\rightarrow\infty$ and
$$\int_Vhu_n^{q}d\mu=-q.$$\\
Further we can assume  $E(u_n)\leq 1+\beta$ for all $n$, then we have

\begin{theorem}\label{theorem1}
Let $G=(V,E)$ be a connected finite graph. Given $h,f\in C(V)$ with $h\leq0$, $f>0.$ Assume $1\leq\alpha\leq p\leq q$. Let $u_{n}$ satisfy the above conditions, then
$\{u_{n}\}$ is bounded in $W^{1,\,p}(G)$. In fact, we have the following estimate
\begin{equation*}\label{Y-finite}
 \|u_{n}\|_{W^{1,\,p}(G)}\leq C_{\lambda,\alpha,\beta,p,q,h,f,G}
\end{equation*}
where $C_{\lambda,\alpha,\beta,p,q,h,f,G}:=p(1+\beta)+\frac{p|\lambda| f_{M}}{\alpha}[\frac{q}{(-h)_{m}}]^{\frac{\alpha}{q}}\mathrm{Vol}(G)^{1-\frac{\alpha}{q}}
+[\frac{q}{(-h)_{m}}]^{\frac{p}{q}}\mathrm{Vol}(G)^{1-\frac{p}{q}}$ is a constant depending only on the information of $\lambda,\alpha,\beta,p,q,h,f,G$.
\end{theorem}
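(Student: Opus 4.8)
The plan is to estimate the two pieces of $\|u_n\|_{W^{1,p}(G)}^p = \int_V |\nabla u_n|^p d\mu + \int_V u_n^p d\mu$ separately, using the energy bound $E(u_n) \leq 1 + \beta$ together with Lemmas \ref{lemma1} and \ref{lemma2}, which already control the relevant quantities on the constraint set $M$.

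First I would isolate the gradient term from the energy functional. Since $E(u_n) = \frac{1}{p}\int_V |\nabla u_n|^p d\mu - \frac{\lambda}{\alpha}\int_V f u_n^\alpha d\mu$, solving for the gradient integral gives
$$\int_V |\nabla u_n|^p d\mu = p\,E(u_n) + \frac{p\lambda}{\alpha}\int_V f u_n^\alpha d\mu.$$
Using $E(u_n)\leq 1+\beta$ and the crude bound $\frac{p\lambda}{\alpha}\int_V f u_n^\alpha d\mu \leq \frac{p|\lambda|}{\alpha}\int_V f u_n^\alpha d\mu$ (valid for either sign of $\lambda$, since $f,u_n\geq 0$), and then applying Lemma \ref{lemma2} followed by Lemma \ref{lemma1} to control $\int_V f u_n^\alpha d\mu$ by $f_M \mathrm{Vol}(G)^{1-\alpha/q}[q/(-h)_m]^{\alpha/q}$, I obtain a bound on $\int_V |\nabla u_n|^p d\mu$ equal to the first two terms of the claimed constant $C_{\lambda,\alpha,\beta,p,q,h,f,G}$.

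Next I would bound the zeroth-order term $\int_V u_n^p d\mu = \|u_n\|_p^p$. Here the hypothesis $p\leq q$ is essential: applying H\"older's inequality with conjugate exponents $\frac{q}{q-p}$ and $\frac{q}{p}$ to $\int_V 1\cdot u_n^p d\mu$ yields $\|u_n\|_p^p \leq \mathrm{Vol}(G)^{1-p/q}\|u_n\|_q^p$, and then Lemma \ref{lemma1} gives $\|u_n\|_q^p \leq [q/(-h)_m]^{p/q}$. This produces exactly the final term of the claimed constant.

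Adding the two estimates bounds $\|u_n\|_{W^{1,p}(G)}^p$, hence $\{u_n\}$ is bounded in $W^{1,p}(G)$. There is no genuine obstacle in this argument; the two points requiring care are absorbing the sign of $\lambda$ into $|\lambda|$ so the estimate holds for arbitrary $\lambda\in\mathbb{R}$, and correctly invoking $p\leq q$ in the H\"older step for the lower-order term --- which is precisely where the structural assumption $1\leq\alpha\leq p\leq q$ feeds into this particular estimate, allowing both $\|u_n\|_\alpha$ (via $\alpha\leq q$) and $\|u_n\|_p$ (via $p\leq q$) to be dominated by the controlled quantity $\|u_n\|_q$.
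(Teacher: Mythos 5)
Your argument is correct and coincides with the paper's own proof: both decompose $\|u_n\|_{W^{1,p}(G)}^p$ into the gradient term, rewritten as $pE(u_n)+\frac{p\lambda}{\alpha}\int_V fu_n^\alpha d\mu$ and controlled via $E(u_n)\leq 1+\beta$ together with Lemmas \ref{lemma1} and \ref{lemma2}, plus the zeroth-order term controlled by H\"older with exponent $q/p$ and Lemma \ref{lemma1}. The only (harmless) discrepancy is that the quantity actually bounded is $\|u_n\|_{W^{1,p}(G)}^p$ rather than $\|u_n\|_{W^{1,p}(G)}$, a slip already present in the paper's statement and irrelevant to the boundedness conclusion.
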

\begin{proof}
Observe that the Sobolev norm $\|\cdot\|_{W^{1,\,p}(G)}$ is related to the energy functional $E(u)$, then by (\ref{inequ1}) and (\ref{inequ2}) we get
\begin{eqnarray*}
\begin{aligned}
\|u_{n}\|_{W^{1,\,p}(G)}=&\int_V|\nabla u_{n}|^pd\mu+\int_V|u_{n}|^{p}d\mu\\
=&\;p(\frac{1}{p} \int_V|\nabla u|^pd\mu-\frac{\lambda}{\alpha}\int_Vfu^{\alpha}d\mu)+\frac{p\lambda}{\alpha}\int_Vfu^{\alpha}d\mu+ \int_V|u_{n}|^{p}d\mu\\
\leq&\;pE(u_{n})+\frac{p\lambda}{\alpha}\int_Vfu^{\alpha}d\mu+\|u_{n}\|_{q}^{p}\mathrm{Vol}(G)^{1-\frac{p}{q}}\\
\leq &\;p(1+\beta)+\frac{p|\lambda| f_{M}}{\alpha}[\frac{q}{(-h)_{m}}]^{\frac{\alpha}{q}}\mathrm{Vol}(G)^{1-\frac{\alpha}{q}}
+[\frac{q}{(-h)_{m}}]^{\frac{p}{q}}\mathrm{Vol}(G)^{1-\frac{p}{q}}\\
=&\;C_{\lambda,\alpha,\beta,p,q,h,f,G}.
\end{aligned}
\end{eqnarray*}$\hfill\Box$
\end{proof}
 Therefore $\{u_{n}\}$ is bounded in $W^{1,\,p}(G)$. Then by Lemma \ref{S}, there exists some $\hat{u}\in W^{1,\,p}(G)=C(V)$ such that up to a subsequence, $u_n\rightarrow \hat{u}$
in $W^{1,\,p}(G)$. We may also denote this subsequence as $u_n$. Note $u_n\geq0$ and $\int_Vhu_n^{q}d\mu=-q$, let $n\rightarrow+\infty$, we know
$\hat{u}\geq0$ and
\begin{equation*}\label{ef}
\int_Vh\hat{u}^{q}d\mu=-q\neq 0,
\end{equation*}
hence $\hat{u} \in M$. This also implies that $\hat{u}\not\equiv0$. Since $G$ is finite graph, by Remark \ref{convergence} the convergence in the Sobolev space $W^{1,\,p}(G)$ is in fact pointwise convergence. Moreover,
 because $G$ is a finite graph, so the energy functional $E(u)$ is actually continuous, which yields that $E(u)$ attains its infimum exactly at the point $\hat{u}\in M$. Therefore we have $$\beta=E(\hat{u}).$$\\

\textbf{Step 3}. $\hat{u}\geq0, \hat{u}\not\equiv0$ is exactly a nontrivial solution of the Yamabe type equation (\ref{L1}) \\

Recall the constrained minimization problem for the following energy functional
\begin{equation*}\label{ef}
  E(u)=\frac{1}{p} \int_V|\nabla u|^pd\mu-\frac{\lambda}{\alpha}\int_Vfu^{\alpha}d\mu,
\end{equation*}
on the space $C(V)$, restricted to the subset\\
$$M=\{u\in C(V): \frac{1}{q}\int_V hu^{q}d\mu=-1\}.$$
Now we derive the Euler-Lagrange equations by the Lagrange multiplier rule.
First set
\begin{equation*}
G(u)=\frac{1}{q}\int_Vhu^{q}d\mu+1.
\end{equation*}
It is easy to see that both $E(u)$ and $G(u)$ are continuously Fr\'{e}chet differentiable, and the Fr\'{e}chet derivatives $DE(u), DG(u)$ are respectively given by
\begin{eqnarray*}
<v, DE(u)>&=&\int_{V}(|\nabla u|^{p-1}\nabla v-\lambda fu^{\alpha-1}v)d\mu\\
&=&\int_{V}(-\Delta_{p}u-\lambda fu^{\alpha-1})vd\mu, \forall v\in C(V),
\end{eqnarray*}
and $$<v,DG(u)>=\int_Vhu^{q-1}vd\mu, \forall v\in C(V).$$
Then let $$\Phi(t,\mu)=E(u+tv)+\mu G(u+tv), \forall v\in C(V),$$  by direct calculation we have
\begin{equation*}
\frac{d\Phi(t,\mu)}{dt}|_{t=0}=<v,DE(u)+\mu DG(u)>\\
=\int_V(-\Delta_{p}u-\lambda fu^{\alpha-1}+\mu hu^{q-1})vd\mu.
\end{equation*}
Therefore if $u$ is a critical point with regard to the above constrained minimization problem, then $u$ satisfies the following Euler-Lagrange equations
\begin{equation*}
\int_V(-\Delta_{p}u-\lambda fu^{\alpha-1}+\mu hu^{q-1})vd\mu=0, \forall v\in C(V).
\end{equation*}
In particular, the infimum point $\hat{u}$ satisfies the Euler-Lagrange equations
\begin{equation}\label{fin}
\int_V(-\Delta_{p}\hat{u}-\lambda f\hat{u}^{\alpha-1}+\mu h\hat{u}^{q-1})vd\mu=0, \forall v\in C(V).
\end{equation}

This implies $\hat{u}\geq0$ is a nontrivial solution of the Yamabe type equation (\ref{L1})
\begin{equation*}
-\Delta_{p}\hat{u}-\lambda f\hat{u}^{\alpha-1}+\mu h\hat{u}^{q-1}=0.
\end{equation*}
Moreover, we can deduce the formula of $\mu$.
Inserting $v=\hat{u}$ into the equality (\ref{fin}) yields that
\begin{equation*}
\int_V(|\nabla \hat{u}|^{p}-\lambda f\hat{u}^{\alpha}+\mu h\hat{u}^{q})d\mu=\int_V(|\nabla \hat{u}|^{p}-\lambda f\hat{u}^{\alpha})d\mu-\mu q=0.
\end{equation*}
thus $\mu$ can be given by
\begin{equation}\label{const}
\mu=\mu(\lambda,\hat{u})=\frac{1}{q}\int_V(|\nabla \hat{u}|^{p}-\lambda f\hat{u}^{\alpha})d\mu.
\end{equation}

\textbf{Step 4}. $\hat{u}>0$.\\

Recall that $$\Phi=\Phi(t,\mu)=E(u+tv)+\mu G(u+tv),$$
then by direct calculation, we have
\begin{equation}\label{equ-gradient-I-i}
\frac{\partial \Phi}{\partial u_i}=\mu_i(-\Delta_pu_i-\lambda f_iu^{\alpha-1}_i+\mu hu^{q-1}_i).
\end{equation}

Note the graph $G$ is connected, if $\hat{u}>0$ is not satisfied, since $\hat{u}\geq0$ and not identically zero, then there is an edge $i\thicksim j$, such that $\hat{u}_i=0$, but $\hat{u}_j>0$. Then by the definition of the $p$-th Laplacian $\Delta_p$ in section 2 we have
$$-\Delta_p\hat{u}_i=-\frac{1}{\mu_i}\sum\limits_{k\thicksim i}\omega_{ik}|\hat{u}_k-\hat{u}_i|^{p-2}(\hat{u}_k-\hat{u}_i)<0.$$
Therefore by (\ref{equ-gradient-I-i}), we have
$$\frac{\partial \Phi}{\partial u_i}\Big|_{u=\hat{u}}=\mu_i(-\Delta_p\hat{u}_i)<0.$$
Recall we had proved that $\hat{u}$ is the minimum value of $\Phi$, hence there should be
$$\frac{\partial \Phi}{\partial u_i}\Big|_{u=\hat{u}}\geq0,$$
which is a contradiction. Hence $\hat{u}>0$.\par
This completes the proof of Theorem \ref{th1}. $\hfill\Box$

\section{Proofs of Theorem \ref{th2} and Theorem \ref{th3}}
\subsection{Proofs of Theorem \ref{th2}}
\label{sect-preliminary-lemma}
Now we can prove Theorem \ref{th2} by Theorem \ref{th1}.\par
First choose any negative constant $\tilde{\lambda}<0$, and assume that $h,f\in C(V)$ with $h\leq0$, $f>0$, $1\leq\alpha\leq p\leq q$, then consider the following $p$-th Yamabe equation
\begin{equation}\label{L3}
  -\Delta_pu+\mu hu^{q-1}=\tilde{\lambda} fu^{\alpha-1}.
\end{equation}
By Theorem \ref{th1}, we know that this equation on $G$ always has a positive solution $\hat{u}>0$ for some constant $\mu=\mu(\tilde{\lambda})\in \mathbb{R}$.
Equivalently, the solution $\hat{u}>0$ satisfies
\begin{equation}\label{L2}
\int_V(-\Delta_{p}\hat{u}-\tilde{\lambda} f\hat{u}^{\alpha-1}+ \mu h\hat{u}^{q-1})vd\mu=0, \forall v\in C(V).
\end{equation}

Next using this positive solution $\hat{u}>0$, we will construct a positive solution $$u=u(\hat{u},\mu)>0$$ of the the $p$-th Yamabe equation
\begin{equation}\label{E}
\int_V(-\Delta_{p}u-\lambda fu^{\alpha-1}+ hu^{q-1})vd\mu=0, \forall v\in C(V).
\end{equation}
Recall that $\tilde{\lambda}<0$, $f>0, \hat{u}>0$, hence by the formula (\ref{const}) we have
\begin{equation*}
\mu=\frac{1}{q}\int_V(|\nabla \hat{u}|^{p}-\tilde{\lambda} f\hat{u}^{\alpha})d\mu>0.
\end{equation*}
then scaling with a suitable power of $\mu$, we set $$u(s)=\mu^{s}\hat{u},$$
thus $\hat{u}=\mu^{-s}u(s)$, substitute it into the equation (\ref{L2})
\begin{eqnarray*}
0&=&\int_V(-\Delta_{p}\hat{u}-\tilde{\lambda} f\hat{u}^{\alpha-1}+ \mu h\hat{u}^{q-1})vd\mu \\
&=&\int_{V}(|\nabla \hat{u}|^{p-1}\nabla v-\tilde{\lambda} f\hat{u}^{\alpha-1}v+\mu h\hat{u}^{q-1}v)d\mu\\
&=&\int_{V}(\mu^{-s(p-1)}|\nabla u(s)|^{p-1}\nabla v-\mu^{-s(\alpha-1)}\tilde{\lambda} fu(s)^{\alpha-1}v+\mu^{1-s(q-1)}hu(s)^{q-1}v)d\mu\\
&=&\mu^{-s(p-1)}\int_{V}(-\Delta_{p}u(s)-\mu^{-s(\alpha-p)}\tilde{\lambda} fu(s)^{\alpha-1}+\mu^{1-s(q-p)}hu(s)^{q-1})vd\mu.
\end{eqnarray*}
 Let $1-s(q-p)=0$, thus $s=\frac{1}{q-p}$. Then set $$u=u(\hat{u},\mu)=u(s)|_{s=\frac{1}{q-p}}=\mu^{\frac{1}{q-p}}\hat{u},
 \lambda=\lambda(\tilde{\lambda},\mu)=\mu^{-s(\alpha-p)}|_{s=\frac{1}{q-p}}\tilde{\lambda}=\mu^{\frac{p-\alpha}{q-p}}\tilde{\lambda}, .$$
  By the equality above we obtain that $\forall v\in C(V)$
 \begin{eqnarray*}
&&\int_V(-\Delta_{p}u-\lambda fu^{\alpha-1}+ hu^{q-1})vd\mu\\
&=&\mu^{\frac{p-1}{q-p}}\int_V(-\Delta_{p}\hat{u}-\tilde{\lambda} f\hat{u}^{\alpha-1}+ \mu h\hat{u}^{q-1})vd\mu\\
&=&0, 
\end{eqnarray*}
which implies $$-\Delta_{p}u+ hu^{q-1}=\lambda fu^{\alpha-1}.$$
Finally, by $\hat{u}>0, \mu>0$ we conclude that $u=\mu^{\frac{1}{q-p}}\hat{u}>0$.\par
This completes the proof of Theorem \ref{th2}. $\hfill\Box$

\subsection{Proofs of theorem \ref{th3}}
\label{sect-preliminary-lemma}
To prove Theorem \ref{th3}, first we observe that the unique difference between Problem I and Problem II is the definition of $\Delta_{p}$. Therefore the proof in this case is totally similar. We only sketch the idea of the proof here.\par
First, we define the Sobolev form $\|\cdot\|_{W^{1,p}}$ and the integration $F(u)=\int_V |\nabla u|^{p}d\mu$ in energy functional by using definitions (\ref{Gama}) and (\ref{tidu})
\begin{eqnarray}\label{T}
 |\nabla u|_{i}=\sqrt{\Gamma(u,u)_{i}}, \Gamma(u,\phi)_{i}=\frac{1}{2\mu_{i}}\sum\limits_{j\thicksim i}\omega_{ij}(u_{j}-u_{i})(\phi_j-\phi_i),
  \forall i \in V.
\end{eqnarray}
Then we can use this $|\nabla u|$ to define energy functional as follows.
For $\alpha\geq p$, we can define energy functional similarly as in \cite{Ge3}
\begin{equation}
I(u)=\left(\int_V|\nabla u|^pd\mu-\int_Vhu^pd\mu\right)\left(\int_Vfu^{\alpha} d\mu\right)^{-\frac{p}{\alpha}},
\end{equation}
and for $\alpha\leq p$, our energy functional (\ref{EF}) is similarly defined by
\begin{equation*}\label{ef}
  E(u)=\frac{1}{p} \int_V|\nabla u|^pd\mu-\frac{\lambda}{\alpha}\int_Vfu^{\alpha}d\mu,
\end{equation*}
on the space $C(V)$, also restricted to the subset\\
$$M=\{u\in C(V): \frac{1}{q}\int_V hu^{q}d\mu=-1\}.$$

Note that $F(u)=\int_V |\nabla u|^{p}d\mu$ in definitions of two energy functionals above is continuously Fr\'{e}chet differentiable. By the proof of Theorem 5 in \cite{GLY3}, the Fr\'{e}chet derivatives $DF(u)$ is given by
\begin{equation}
<v, DF(u)>=\int_V (-\Delta_{p}u)vd\mu, \forall v\in C(V),
\end{equation}
where $\Delta_{p}$ is exactly the $p$-th Laplacian $\Delta_{p}$ in (\ref{PL}).\par
This equality is key to guarantee that all of proofs in this paper and in \cite{Ge3} can be generalized in this case. This can be checked explicitly for two cases $\alpha\geq p$, $\alpha\leq p$ step by step. $\hfill\Box$\\

\noindent \textbf{Acknowledgements:} The first author would like to thank Professor Yanxun Chang for constant guidance and encouragement. The second author would like to thank Professor Gang Tian and Huijun Fan for constant encouragement and support. Both authors would also like to thank Professor Huabin Ge for many helpful conversations. The first author is supported by National Natural Science Foundation of China under Grant No. 11431003. The second author is supported by National Natural Science Foundation of China under Grant No. 11401578.

Xiaoxiao Zhang: xiaoxiaozhang0408@bjtu.edu.cn\\
Institute of Mathematics, Beijing Jiaotong University, Beijing 100044, P.R. China\\
Aijin Lin: aijinlin@pku.edu.cn\\
College of Science, National University of Defense Technology, Changsha 410073, P. R. China\\
\end{document}